\newtheorem{thm}{Theorem}[section]
\newtheorem{prob}{Problem}[section]
\newtheorem{lem}{Lemma}[section]
\theoremstyle{definition}
\def\-{\mbox{--}}
\begin{document}
\title{\Large\bf A sharp upper bound for the rainbow 2-connection number
of 2-connected graphs\footnote{Supported by NSFC No.11071130.}}
\author{\small  Xueliang~Li, Sujuan~Liu\\
\small Center for Combinatorics and LPMC-TJKLC\\
\small Nankai University, Tianjin 300071, China\\
\small lxl@nankai.edu.cn; sjliu0529@126.com}
\date{}
\maketitle
\begin{abstract}
A path in an edge-colored graph is called {\em rainbow} if no two
edges of it are colored the same. For an $\ell$-connected graph $G$
and an integer $k$ with $1\leq k\leq \ell$, the {\em rainbow
$k$-connection number} $rc_k(G)$ of $G$ is defined to be the minimum
number of colors required to color the edges of $G$ such that every
two distinct vertices of $G$ are connected by at least $k$
internally disjoint rainbow paths. Fujita et. al. proposed a problem
that what is the minimum constant $\alpha>0$ such that for all
2-connected graphs $G$ on $n$ vertices, we have $rc_2(G)\leq \alpha
n$. In this paper, we prove that $\alpha=1$ and $rc_2(G)=n$ if and
only if $G$ is a cycle of order $n$, settling down this problem.

{\flushleft\bf Keywords}: rainbow edge-coloring, rainbow
$k$-connection number, 2-connected graph, ear decomposition.

{\flushleft\bf AMS subject classification 2010}: 05C40, 05C15.

\end{abstract}

\section{Introduction}

All graphs considered in this paper are simple, finite and
undirected. We follow the terminology and notation of Bondy and
Murty \cite{bondy2008graph}. A path in an edge-colored graph is
called {\em rainbow} if every two edges on it have distinct colors.
Let $G$ be an edge-colored $\ell$-connected graph, where $\ell$ is a
positive integer. For $1\leq k\leq \ell$, $G$ is {\em rainbow
$k$-connected} if every pair of distinct vertices of $G$ are
connected by at least $k$ internally disjoint rainbow paths. The
minimum number of colors required to color the edges of $G$ to make
$G$ rainbow $k$-connected is called the {\em rainbow $k$-connection
number} of $G$, denoted by $rc_k(G)$. Particularly, $rc_1(G)$ is
equal to $rc(G)$, the rainbow connection number. For more results on
this topic, see a recent book by Li and Sun \cite{Li2012connected}.

A graph $G$ is {\em minimally $k$-connected} if $G$ is $k$-connected
but $G-e$ is not $k$-connected for every $e\in E(G)$. Let $G^\prime$
be a subgraph of a graph $G$. An {\em ear} of $G^\prime$ in $G$ is a
nontrivial path in $G$ whose end vertices lie in $G^\prime$ but
whose internal vertices are not. An {\em ear decomposition} of a
2-connected graph $G$ is a sequence $G_0, G_1, \cdots, G_k$ of
2-connected subgraphs of $G$ such that (1) $G_0$ is a cycle of $G$;
(2) $G_i=G_{i-1}\bigcup P_{i-1}(1\leq i\leq k)$, where $P_{i-1}$ is
an ear of $G_{i-1}$ in $G$; (3) $G_{i-1}(1\leq i\leq k)$ is a proper
subgraph of $G_i$; (4) $G_k=G$. It is obvious that every graph $G_i$
in an ear decomposition is 2-connected. Two paths $P^\prime,
P^{\prime\prime}$ from $v_i$ to $v_j$ are internally disjoint if
$V(P^\prime)\bigcap V(P^{\prime\prime})=\{v_i, v_j\}$. For three
distinct vertices $v^\prime, v_1^{\prime\prime},
v_2^{\prime\prime}$, the paths $P^\prime$ and $P^{\prime\prime}$
from $v^\prime$ to $v_1^{\prime\prime}$ and $v_2^{\prime\prime}$,
respectively, are internally disjoint if $V(P^\prime)\bigcap
V(P^{\prime\prime})=\{v^\prime\}$. Two paths $P^\prime$ and
$P^{\prime\prime}$ are disjoint if $V(P^\prime)\bigcap
V(P^{\prime\prime})=\emptyset$.

The concept of rainbow $k$-connection number $rc_k(G)$ was
introduced by Chartrand et. al. \cite{Chartrand2008graph,
Chartrand2009graph}. It was shown in \cite{chakraborty2009hardness}
that computing the rainbow connection number of a graph is NP-hard.
Hence, bounds on rainbow connection number for graphs have been a
subject of investigation. There are some results in this direction.
For a connected graph $G$, $rc(G)\leq n-1$ in
\cite{caro2008rainbow}. An upper bound for a connected graph with
minimum degree $\delta$ is $3n/(\delta+1)+3$ in
\cite{Chandran2011dominating}. If $G$ is a 2-connected graph of
order $n$, then $rc(G)\leq\lceil\frac{n}{2}\rceil$ and
$rc(C_n)=\lceil \frac{n}{2}\rceil$, where $C_n$ is an $n$-vertex
cycle in \cite{Li2011connected}. An easy observation is that
$rc_2(C_n)=n$. In \cite{Fujita2011}, the authors proved the
following theorem and proposed a problem.

\begin{thm}\cite{Fujita2011}\label{thm1}
If $\ell\geq2$ and $G$ is an $\ell$-connected graph of order $n\geq \ell+1$,
then $rc_2(G)\leq(\ell+1)n/\ell$.
\end{thm}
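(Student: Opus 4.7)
The plan is to proceed via the ear decomposition of $2$-connected graphs, noting that any $\ell$-connected graph with $\ell\geq2$ is in particular $2$-connected and therefore admits such a decomposition. I would carry out an induction along the decomposition in which both rainbow $2$-connectivity and the color budget are preserved at every step.

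First, I would fix an ear decomposition $G_0,G_1,\ldots,G_k=G$ and prove, by induction on $i$, the strengthened claim that $G_i$ admits a rainbow $2$-connected edge coloring using at most $(\ell+1)|V(G_i)|/\ell$ colors. For the base case, I would color each edge of the initial cycle $G_0$ with a distinct fresh color; this uses $|V(G_0)|\leq(\ell+1)|V(G_0)|/\ell$ colors and makes $G_0$ rainbow $2$-connected automatically, since the two arcs of the cycle between any two vertices give two internally disjoint rainbow paths.

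For the inductive step, suppose $G_i$ is already rainbow $2$-colored within budget, and let $P_i$ be the next ear, with $t\geq1$ internal vertices and $t+1$ edges. The plan is to allocate colors to $P_i$ in a hybrid fashion: assign roughly $t$ fresh colors along $P_i$ so that the ear itself is a rainbow path (furnishing one of the two required rainbow paths between every pair of vertices on the ear), and then exploit the $\ell$-connectivity of $G$ to extract, for each affected pair of vertices, a second internally disjoint rainbow path routed through $G_i$. Some edges of $P_i$ near its endpoints can then be recolored with colors already used in $G_i$, provided these borrowed colors do not collide with the colors on the alternative paths being relied upon. The hope is that, on average, about $(\ell+1)/\ell$ colors per new vertex suffice, which matches the target bound.

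The principal obstacle is in the inductive step: guaranteeing that every pair of vertices in $G_{i+1}$, especially mixed pairs with one endpoint on the new ear $P_i$ and one in $V(G_i)\setminus V(P_i)$, still has two internally disjoint rainbow paths after the color borrowing. Old--old pairs may lose their previous rainbow paths if a borrowed color duplicates the color of a critical edge, and new--old pairs demand two paths leaving $P_i$ through its two distinct endpoints. The $\ell$-connectivity of the ambient graph is exactly what supplies enough internally disjoint alternative routes in $G_i$ to absorb these collisions; making the local colorings compatible across successive ears, and quantifying the savings to obtain the factor $1/\ell$, is the delicate combinatorial accounting on which the proof hinges.
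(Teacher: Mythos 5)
Your proposal is a plan rather than a proof: the inductive step---which you yourself describe as ``the delicate combinatorial accounting on which the proof hinges''---is precisely the part that is missing, and it is the entire content of the theorem. Three things are concretely unresolved. First, the factor $(\ell+1)/\ell$ never materializes: you say you will ``exploit the $\ell$-connectivity of $G$'' to borrow colors, but you give no mechanism by which higher connectivity yields a saving proportional to $1/\ell$; nothing in your construction distinguishes $\ell=2$ from $\ell=17$, so the stated budget is never actually verified. Second, an ear decomposition of a general $2$-connected graph contains ears with no internal vertices (chords), for which your amortized count of ``about $(\ell+1)/\ell$ colors per new vertex'' is vacuous; one must first pass to a spanning subgraph in which every ear adds at least one vertex (e.g.\ a spanning minimally $2$-connected subgraph), and you do not. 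Third, recoloring edges of the new ear with colors already present in $G_i$ can destroy rainbow paths that earlier stages of the induction relied on, for old--old pairs as well as mixed pairs; ruling this out requires a substantially stronger induction hypothesis than ``$G_i$ is rainbow $2$-connected,'' and you name this obstacle without supplying any way around it.

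For calibration: Theorem~\ref{thm1} is quoted in this paper from Fujita, Liu and Magnant and is not proved here, so there is no in-text proof to compare you against. The closest analogue is Lemma~\ref{lem2}, which carries out your plan rigorously for minimally $2$-connected non-cycles and in fact obtains the stronger bound $n-1$. There the induction maintains five invariants (A1)--(A5), including rainbow linkage statements for triples and quadruples of vertices and an injective map $f_i$ assigning to each vertex of degree at least three a color that appears exactly once, on an edge incident with it; it is exactly this reservoir of single-use colors, together with the special ear decomposition of Lemma~\ref{lem1} in which every ear contains a vertex of degree two, that makes the one reused color per ear safe. Your sketch identifies the right difficulty but supplies none of this machinery, so as written it does not establish the bound.
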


\begin{prob}\cite{Fujita2011}\label{problem}
What is the minimum constant $\alpha>0$ such that for all 2-connected graphs $G$ on $n$
vertices, we have $rc_2(G)\leq \alpha n$?
\end{prob}

In a published version of \cite{Fujita2011}, they stated the
following theorem and problem.

\begin{thm}\cite{Fujita2011Dense}\label{thm11}
If $G$ is a $2$-connected graph of order $n\geq 3$, then
$rc_2(G)\leq {3n}/2$.
\end{thm}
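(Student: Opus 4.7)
The plan is to color $G$ through an ear decomposition $G_0 \subset G_1 \subset \cdots \subset G_k = G$, with $G_0$ a cycle of length $n_0 \geq 3$ and each $G_i = G_{i-1} \cup P_i$ for an ear $P_i$ of length $\ell_i$. The decomposition identity $n = n_0 + \sum_{i=1}^{k}(\ell_i - 1)$ will translate per-ear color budgets into the overall bound $3n/2$.

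\emph{Base step.} Give the edges of $G_0$ pairwise distinct colors, which makes the cycle $C_{n_0}$ rainbow $2$-connected using $n_0$ colors. \emph{Inductive step.} Assume $G_{i-1}$ has already been made rainbow $2$-connected, and extend the coloring to $P_i$. If $\ell_i = 1$, assign the new chord any existing color; the witnessing rainbow paths from $G_{i-1}$ still work in $G_i$, so no new color is spent. If $\ell_i \geq 2$, assign fresh colors to the edges of $P_i$, making $P_i$ a rainbow subpath with colors disjoint from those on $G_{i-1}$. Then $G_i$ is rainbow $2$-connected: pairs inside $V(G_{i-1})$ keep their old witnesses; for a pair $(w, y)$ with $w$ internal to $P_i$ and $y \in V(G_{i-1})$, the two paths leaving $w$ along the two arms of $P_i$ through the endpoints $u$ and $v$ continue via internally disjoint rainbow $u$-$y$ and $v$-$y$ paths in $G_{i-1}$; for a pair of two internal vertices of $P_i$, use the ear subpath between them together with the complementary route through a fixed rainbow $u$-$v$ path in $G_{i-1}$. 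In every case the fresh colors on the ear automatically guarantee that each concatenation is rainbow.

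The crude total count is $n_0 + \sum_{\ell_i \geq 2} \ell_i$, which can exceed $3n/2$ when many length-$2$ ears appear. The key refinement is to reuse one previously-used color on each non-chord ear, chosen to avoid interference with the rainbow paths and fans from $\{u, v\}$ used in the verification; with such reuse each non-chord ear contributes at most $\ell_i - 1$ new colors, and the telescoping sum bounds the total by $n_0 + \sum_{\ell_i \geq 2}(\ell_i - 1) \leq n \leq 3n/2$. The main obstacle is twofold: first, the verification above actually requires a rainbow $\{u,v\}$-to-$y$ fan in $G_{i-1}$, which is formally stronger than rainbow $2$-connectivity of $G_{i-1}$, so one either strengthens the inductive hypothesis to track such fans or supplies a structural lemma deriving them from rainbow $2$-connectivity together with the $2$-connectivity of $G_{i-1}$; second, justifying the color reuse requires maintaining a reservoir of old colors not occurring on the fans and on the fixed rainbow $u$-$v$ path. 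The slack in targeting $3n/2$ rather than the tight bound $n$ means reuse need only succeed on a fraction of the ears, which noticeably eases the feasibility argument compared with pushing all the way to the sharp bound.
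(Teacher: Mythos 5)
Your plan has the right skeleton---it is essentially the skeleton of the argument this paper gives for the \emph{stronger} bound $rc_2(G)\le n$ (the $3n/2$ statement itself is only cited here from Fujita, Liu and Magnant, not reproved)---but the two ``obstacles'' you name at the end are exactly where all the work lies, and you leave both open, so as it stands this is a plan rather than a proof. First, the fan condition. You correctly observe that a pair $(w,y)$ with $w$ internal to the ear requires two internally disjoint rainbow paths from the attachment vertices $u,v$ to $y$, and you offer two ways out: strengthen the induction, or prove a lemma deriving such fans from rainbow $2$-connectivity. The second option is problematic: rainbow $2$-connectivity of an edge-colored graph does not in any evident way yield rainbow fans, which is why the paper carries the fan property as an explicit inductive invariant (its condition (A2)). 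Moreover, once you add fans to the induction you must maintain them across ear additions, and verifying (A2) for $G_i$ when the apex lies on the new ear forces yet another invariant: for four vertices one needs two \emph{disjoint} rainbow paths pairing $\{u,v\}$ with the two targets (the paper's condition (A3)). Your proposal does not anticipate this second strengthening, and without it the induction does not close.

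Second, the counting. Without reuse your construction spends $n_0+\sum\ell_i=n+m'$ colors, where $m'$ is the number of ears having an internal vertex; since $m'$ can be as large as $n-n_0$ (all such ears of length $2$), this is up to $2n-n_0$, which exceeds $3n/2$ whenever $n_0<n/2$. So you genuinely need reuse on roughly half the ears, yet you give no criterion for when an ear admits a safely reusable old color and no argument that enough ears do. The paper's device is concrete: it maintains an injective map $f_i$ assigning to each vertex $v$ of degree at least three a color appearing exactly once, on an edge incident with $v$ (conditions (A4)--(A5)); when an ear is attached at $v_1,v_q$, the ear's edge at $v_q$ receives the color $f_{i-1}(v_1)$, and properties (B1)--(B2) show this reuse never spoils a required rainbow path. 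Keeping $f_i$ injective in turn relies on the special ear decomposition of Lemma \ref{lem1}, in which every ear contains a vertex of degree two of $G$, obtained by first passing to a spanning minimally $2$-connected subgraph. Some version of this bookkeeping---or a genuinely different counting argument exploiting the slack up to $3n/2$---has to be supplied before your outline becomes a proof.
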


\begin{prob}\cite{Fujita2011Dense}\label{problem11}
For $1\leq k\leq \ell$, derive a sharp upper bound for $rc_k(G)$, if
$G$ is an $\ell$-connected graph on $n$ vertices. Is there a
constant $\alpha=\alpha(k,\ell)$ such that $rc_k(G) \leq \alpha n$ ?
\end{prob}

Problem \ref{problem} is restated in \cite{Li2012connected}. From
Theorem \ref{thm11} and $rc_2(C_n)=n$, it is obvious that $1\leq
\alpha\leq 3/2$. For a 2-connected series-parallel graph $G$, the
authors of \cite{Fujita2011, Fujita2011Dense} showed that
\begin{thm}\cite{Fujita2011, Fujita2011Dense}
If $G$ is a 2-connected series-parallel graph on $n$ vertices, then
$rc_2(G)\leq n$.
\end{thm}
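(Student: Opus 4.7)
The natural plan is strong induction on $n$, exploiting the familiar structure theorem: every simple 2-connected series-parallel graph on $n\ge 3$ vertices has a vertex of degree exactly $2$ (a consequence of excluding a $K_4$-minor, together with a standard analysis of the series-parallel construction). The base case $n=3$ forces $G=C_3$, where $rc_2(G)=3=n$.

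For the inductive step I will pick a degree-$2$ vertex $v$ with $N(v)=\{u_1,u_2\}$ and split on whether $u_1u_2\in E(G)$. In the series-reducible case $u_1u_2\notin E(G)$, I form $G'=(G-v)+u_1u_2$, a simple 2-connected series-parallel graph on $n-1$ vertices, and apply the induction hypothesis to obtain a rainbow 2-connected colouring of $G'$ in at most $n-1$ colours. I lift it to $G$ by giving $u_1v$ the old colour of $u_1u_2$ and $vu_2$ a fresh colour $c^\ast$, for at most $n$ colours in total. Pairs $\{x,y\}\subseteq V(G)\setminus\{v\}$ are handled by rerouting through $v$ any rainbow path of $G'$ that previously used $u_1u_2$; the inserted colour $c^\ast$ is safe because it appears nowhere else. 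Pairs $\{v,x\}$ will be handled by stitching together two internally disjoint rainbow paths from $u_1$ and $u_2$ to $x$ in $G'$.

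If instead $u_1u_2\in E(G)$, the series reduction would produce a multi-edge, so I will pass to $G-v$ directly: it is connected and series-parallel but possibly not 2-connected, and I will analyse its block decomposition. Either some block containing both $u_1$ and $u_2$ yields a smaller 2-connected series-parallel graph on which I can recurse, or I can relocate the argument to a different degree-$2$ vertex not trapped in a triangle. Since only one new vertex $v$ is being re-introduced, I can afford only one new colour on the two edges at $v$, e.g.\ re-using the colour of $u_1u_2$ on $vu_1$ and taking one fresh colour on $vu_2$.

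The main obstacle in both cases is the internal-disjointness requirement for pairs $\{v,x\}$. Rainbow 2-connectivity of the reduced graph supplies two internally disjoint rainbow paths between any two of its vertices, but it does not automatically furnish a rainbow $u_1$-$x$ path and a rainbow $u_2$-$x$ path that are \emph{simultaneously} internally disjoint. Establishing this compatibility, presumably via a Menger-style exchange among the available rainbow paths together with the 2-connectedness of the reduced graph, is where the real work lies. Verifying this step cleanly, and handling the triangle subcase without inflating the colour count, are the delicate pieces; the colour-counting and the reductions themselves are routine.
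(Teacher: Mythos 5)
Your reduction scheme (suppressing a degree-$2$ vertex of a $2$-connected series-parallel graph, or deleting it in the triangle case) is sound as far as it goes, and it is genuinely different from the route in this paper: here the series-parallel theorem is quoted from Fujita, Liu and Magnant, and what is actually proved is the stronger statement that \emph{every} $2$-connected non-cycle $G$ has $rc_2(G)\leq n-1$, via a spanning minimally $2$-connected subgraph and a carefully chosen ear decomposition (Lemmas \ref{lem1} and \ref{lem2}). But your proposal has a genuine gap, and you have named it yourself without closing it: for a pair $\{v,x\}$ you need a rainbow $u_1$--$x$ path and a rainbow $u_2$--$x$ path meeting only in $x$, and plain rainbow $2$-connectivity of the reduced graph $G'$ does not supply this. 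A ``Menger-style exchange'' does not rescue it: Menger gives internally disjoint paths, but nothing guarantees that the exchanged paths remain rainbow under the colouring you inherited from the induction hypothesis. Since the whole point of the induction is that you control only the number of colours, not which rainbow paths exist, the induction as you have set it up simply does not close. The triangle subcase is likewise only a sketch; deleting $v$ can destroy $2$-connectivity, and ``relocate to a different degree-$2$ vertex'' fails for graphs (e.g.\ built from triangles) in which every degree-$2$ vertex lies in a triangle.

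The fix, which is exactly what this paper does in the proof of Lemma \ref{lem2}, is to strengthen the induction hypothesis. Instead of carrying only ``rainbow $2$-connected with at most $n-1$ colours,'' the colouring $c_i$ is required to satisfy the fan condition (A2) (two internally disjoint rainbow paths from one vertex to two prescribed targets), the linkage condition (A3) (two disjoint rainbow paths joining two prescribed pairs), and the bookkeeping conditions (A4)--(A5), which reserve, for each vertex $v$ of degree at least $3$, a colour $f_i(v)$ that appears on exactly one edge, incident with $v$; this reserved colour is what lets a new ear be attached while reusing an old colour and keeping all the required paths rainbow. If you graft an analogue of (A2)--(A5) onto your degree-$2$-vertex induction, your argument should go through and would be a clean self-contained proof for the series-parallel case; without it, the step you flag as ``where the real work lies'' is precisely the step that is missing.
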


In this paper, we will show that the above result holds for general
2-connected graphs.
\begin{thm} \label{thm0}
If $G$ is a 2-connected graph on $n$ vertices, then $rc_2(G)\leq n$
with equality if and only if $G$ is a cycle of order $n$. Therefore,
the constant $\alpha=1$ in Problem \ref{problem}.
\end{thm}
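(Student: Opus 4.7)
My plan is to use an ear decomposition $G_0, G_1, \ldots, G_k$ of $G$, where $G_0$ is a cycle and each $G_i = G_{i-1} \cup P_{i-1}$ for an ear $P_{i-1}$ with $t_{i-1} \ge 0$ internal vertices. Then $n = |V(G_0)| + \sum_{i=0}^{k-1} t_i$, and $G = C_n$ precisely when $k = 0$.

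First I would dispose of the cycle case. If $G = C_n$ then between any two vertices the only two internally disjoint paths are the two arcs of the cycle, so rainbow 2-connectivity forces each arc, and hence every edge of $C_n$, to receive a distinct color. This gives $rc_2(C_n) = n$ and settles the equality case.

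For a non-cycle $G$ (i.e., $k \ge 1$), I would prove $rc_2(G) \le n - 1$ by induction on $k$, constructing the coloring explicitly from the ear decomposition. The base case $k = 1$ treats a theta graph $\Theta(\ell_1, \ell_2, \ell_3)$; here I would exhibit a coloring using $\ell_1 + \ell_2 + \ell_3 - 2 = |V(G_1)| - 1$ colors, obtained by letting two of the three $u$-$v$ paths share a single color while the third uses entirely new colors, arranged so that the required pair of internally disjoint paths between every two vertices of $G_1$ is rainbow (verification on small cases such as $\Theta(2,2,2) = K_{2,3}$ and $\Theta(1,2,3)$ supports this). For the inductive step $k \ge 2$, the graph $G_{k-1}$ is a 2-connected non-cycle with $k-1 \ge 1$ ears, so by induction it admits a rainbow 2-connection coloring with $\le |V(G_{k-1})| - 1$ colors. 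I would extend this coloring by assigning $t = t_{k-1}$ new colors to $t$ of the $t+1$ edges of the new ear $P_{k-1}$ and reusing one existing color on the remaining edge, for a total of $|V(G_{k-1})| - 1 + t = n - 1$ colors.

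The principal obstacle is verifying rainbow 2-connectivity after each ear addition. Every internal vertex $w$ of the new ear has degree $2$ in $G$, so its two internally disjoint paths to any old vertex $x$ are forced to leave $w$ in opposite directions along the ear, pass through the ear endpoints $u, v$, and continue in $G_{k-1}$ along routes sharing only $x$. For both composite paths to be rainbow, the coloring of $G_{k-1}$ must supply two internally disjoint rainbow paths from $u$ and from $v$ to $x$---a rainbow $\{u,v\}$-fan at $x$, a condition strictly stronger than plain rainbow 2-connectivity. I expect to meet this either by strengthening the inductive invariant to propagate such rainbow fans through every ear addition, or by choosing the reused color and the ear edge it lies on delicately so that the paths through the old subgraph remain rainbow for every new vertex pair. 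The same fan analysis drives the theta base case; once it is installed, the strict saving needed for the non-cycle case propagates cleanly through the induction.
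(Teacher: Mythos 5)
Your skeleton coincides with the paper's: induction along an ear decomposition, a theta-graph base case, $t$ new colors plus one reused color per ear, and a strengthened invariant guaranteeing rainbow fans (the paper's conditions (A2) and (A3)). But the step you defer --- ``strengthening the inductive invariant \dots\ or choosing the reused color and the ear edge it lies on delicately'' --- is where essentially all of the work lives, and as stated your plan meets a concrete obstruction. To make the reuse safe, the paper maintains, in addition to the two fan conditions, an injective map assigning to every vertex $v$ of degree at least $3$ a \emph{private} color: one appearing on exactly one edge of $G_{i-1}$, that edge incident with $v$. The color reused on a new ear $P$ from $u$ to $v$ is the private color of $u$, placed on the end-edge of $P$ at $v$; privacy is precisely what guarantees that every rainbow path of $G_{i-1}$ avoiding $u$ also avoids that color, which is what makes the composite paths through the ear rainbow. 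Now count: an ear with $t$ internal vertices brings $t$ new colors, but after the addition the endpoint $u$ needs a fresh private color (its old one now appears twice) and so does every internal vertex of degree $\ge 3$ in $G$ --- that is $t+1$ vertices if no internal vertex of the ear has degree two, and the bookkeeping fails by one. So the induction cannot be run on an arbitrary ear decomposition of $G$ itself. The paper escapes by first passing to a spanning minimally $2$-connected subgraph and proving (Lemma \ref{lem1}, via Bollob\'{a}s's structure theorems for minimally $2$-connected graphs) that such a graph has an ear decomposition in which every ear contains an internal vertex of degree two; that vertex is the one excused from needing a private color, and the count balances. This reduction forces a further case split absent from your plan: when $G$ is Hamiltonian its spanning minimally $2$-connected subgraph may be a Hamiltonian cycle, which needs $n$ colors, so that case is handled separately by explicitly coloring a Hamiltonian cycle plus one chord.

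A smaller point: your base case is off by one. Letting two of the three $u$--$v$ paths of $\Theta(\ell_1,\ell_2,\ell_3)$ share a single color and giving every other edge a new color uses $\ell_1+\ell_2+\ell_3-1=|V(G_1)|$ colors, not $|V(G_1)|-1$; two repetitions are needed (the paper reuses, on each end-edge of the ear, the color of a cycle edge incident with the opposite endpoint, and this placement is again dictated by the privacy requirement). Your disposal of the cycle case, $rc_2(C_n)=n$, is correct.
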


The following classic results on minimally 2-connected graphs are
needed in the sequel.

\begin{thm}\cite{Bollobas1978graph}\label{thm2}
Let $G$ be a minimally 2-connected graph that is not a cycle. Let $D\subset V(G)$ be the set
of vertices of degree two. Then $F=G-D$ is a forest with at least two components. A component
$P$ of $G[D]$ is a path and the end vertices of $P$ are not jointed to the same tree of the
forest $F$.
\end{thm}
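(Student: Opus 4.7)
My plan is to prove three assertions---$F$ is a forest (i), each component of $G[D]$ is a path whose endpoints attach to vertices in distinct trees of $F$ (ii), and $F$ has at least two components (iii)---by establishing (i) and (ii) directly and deducing (iii) from (ii).

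For (i), I would invoke the Dirac--Plummer characterization: a 2-connected graph is minimally 2-connected if and only if every cycle contains a vertex of degree $2$. If $F$ contained a cycle, every one of its vertices would have $G$-degree $\geq 3$ (since $V(F) = V(G) \setminus D$), contradicting the characterization. The characterization itself is proved by showing that a cycle $C$ with all degrees $\geq 3$ admits an edge $e$ whose removal preserves 2-connectivity: for $x \notin V(C)$ the cycle $C$ survives in $G - x$ and witnesses that $e$ is not a bridge; for $x \in V(C)$ distinct from the endpoints of $e$, the extra branch edge at $x$ (guaranteed by $\deg_G(x) \geq 3$) combined with 2-connectivity of $G$ yields an $e$-avoiding detour in $G - x$.

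For (ii), vertices in $D$ have $G[D]$-degree at most $2$, so each component of $G[D]$ is a path or a cycle; a cycle component would have no $G$-edges leaving its vertex set, forcing $G$ itself to be a cycle contrary to hypothesis. Let $P = v_1 \cdots v_k$ be a path component of $G[D]$ with external $V(F)$-neighbors $u_1$ of $v_1$ and $u_2$ of $v_k$. First, $u_1 \neq u_2$: otherwise $u_1$ would be a cut vertex separating $P$ from the rest of $G$, contradicting 2-connectivity. Assume for contradiction that $u_1, u_2$ lie in the same tree $T$ of $F$, and let $Q$ be the unique $u_1$-$u_2$ path in $T$; then $C = P \cup Q \cup \{u_1 v_1, u_2 v_k\}$ is a cycle of $G$. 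Picking an edge $e$ of $Q$, I would show $G - e$ is 2-connected, contradicting minimal 2-connectivity. The verification requires $G - e - w$ connected for every $w$: the cycle $C$ handles $w \notin V(C)$ directly; for $w \in V(C)$, one combines the surviving portion of $C$ with branches of $T$ meeting $Q$ and with other path components of $G[D]$ attached to $T$ to reconnect the two sides of $e$.

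Finally, (iii) follows from (ii): $D$ is nonempty (otherwise $F = G$ would be a 2-connected forest, impossible on $\geq 3$ vertices), so $G[D]$ has a path component with endpoints in two distinct trees, giving $\geq 2$ components of $F$. The main obstacle is the 2-connectivity of $G - e$ for $e \in Q$ in (ii) when a candidate cut vertex $w$ lies on $V(C)$: the cycle $C$ alone no longer certifies that $e$ is non-bridge, so one must exploit the global structure of $G$ around $Q$---notably the auxiliary path components of $G[D]$ whose existence is forced by the minimality hypothesis---to construct the required alternative paths in $G - e - w$.
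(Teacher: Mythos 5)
This statement is quoted from Bollob\'as's \emph{Extremal Graph Theory} and the paper supplies no proof of it, so there is nothing internal to compare against; your attempt has to stand on its own, and it does not. The decisive gap is in step (ii), and you name it yourself: after forming the cycle $C=P\cup Q\cup\{u_1v_1,u_2v_k\}$, the entire content of the theorem is the claim that some edge $e$ of $Q$ can be deleted while preserving $2$-connectivity, and this is precisely the step you leave as ``the main obstacle.'' The case $w\in V(C)$ is not a routine verification: one must show that no vertex $w$ together with $e$ separates the graph, and the vague appeal to ``branches of $T$ meeting $Q$ and other path components of $G[D]$ attached to $T$'' is not an argument --- it is not even clear that an edge of $Q$ (as opposed to some other edge of $G$) is always removable. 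Since assertion (iii) is deduced from (ii), the two substantive conclusions of the theorem ($F$ has at least two components; the ends of $P$ attach to distinct trees) are both unproven. Only the easy parts survive: that components of $G[D]$ are paths, that $u_1\neq u_2$, and that $D\neq\emptyset$.

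There is also a genuine error in step (i). The statement you call the Dirac--Plummer characterization, ``$G$ is minimally $2$-connected if and only if every cycle contains a vertex of degree $2$,'' is false in the ``if'' direction: take two vertices $u,v$ joined by an edge and by two internally disjoint paths of length $2$; every cycle contains a degree-$2$ vertex, yet deleting $uv$ leaves a $2$-connected graph. (The correct characterization is that no cycle has a chord.) You only need the ``only if'' direction, which is true, but your sketch of it also fails: it is not the case that an \emph{arbitrary} edge $e$ of a cycle $C$ with all degrees at least $3$ satisfies ``$G-e$ is $2$-connected.'' One can build a $2$-connected graph with such a cycle $C=xyawb$ in which $w$ becomes a cut vertex of $G-xy$ (hang a triangle off each of $x$ and $y$ and route all other connections through $w$); the existence of \emph{some} removable edge is exactly the nontrivial content of Dirac's lemma and requires a careful choice that your case analysis does not make. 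In short, both the forest claim and the different-trees claim are reduced to assertions whose proofs are either absent or do not work as described.
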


\begin{thm}\cite{Bollobas1978graph}\label{thm3}
Every 2-connected subgraph of a minimally 2-connected graph is minimally 2-connected.
\end{thm}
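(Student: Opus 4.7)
Let $G$ be minimally $2$-connected and $H$ a $2$-connected subgraph of $G$. My plan is to argue by contradiction: assume $H$ is \emph{not} minimally $2$-connected, so there is an edge $e \in E(H)$ with $H - e$ still $2$-connected. I will deduce that $G - e$ is also $2$-connected, which contradicts minimal $2$-connectedness of $G$.

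The engine of the argument is a generalization of the ear decomposition in which the starting subgraph is an arbitrary $2$-connected subgraph rather than a cycle. The lemma I need is: whenever $H'$ is a proper $2$-connected subgraph of a $2$-connected graph $\widetilde G$, there exists an ear of $H'$ in $\widetilde G$. I handle two subcases. If $V(H') = V(\widetilde G)$, then any edge in $E(\widetilde G) \setminus E(H')$ is a length-one ear. Otherwise, let $C$ be a component of $\widetilde G - V(H')$; by $2$-connectedness of $\widetilde G$, the attachment set $N_{\widetilde G}(C) \cap V(H')$ has at least two vertices (a single attachment would be a cut vertex of $\widetilde G$), so choosing two distinct attachments and joining them through $C$ produces the desired ear. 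Iterating inside $G$ starting from $H$, I obtain a sequence $H = H_0 \subsetneq H_1 \subsetneq \cdots \subsetneq H_t = G$ with each $H_i = H_{i-1} \cup P_i$ and $P_i$ an ear of $H_{i-1}$ in $G$. I also invoke the classical fact that adjoining an ear to a $2$-connected graph keeps it $2$-connected; the verification is short, since the two halves of the ear always provide a detour around any candidate cut vertex, whether that vertex lies in the host graph or in the interior of the new ear.

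Now I replace $H_0 = H$ by $H - e$ and run the same ear sequence. Since $e \in E(H)$, while each $P_i$ uses only edges in $E(G) \setminus E(H_{i-1}) \subseteq E(G) \setminus E(H)$, no $P_i$ contains $e$; the endpoints of $P_i$ still lie in $V(H_{i-1}) = V(H_{i-1} - e)$, so $P_i$ is also an ear of $H_{i-1} - e$. Hence inductively $(H - e) \cup P_1 \cup \cdots \cup P_i = H_i - e$ and each is $2$-connected. In particular $G - e = H_t - e$ is $2$-connected, yielding the contradiction.

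The step I expect to be the main obstacle is the clean formulation and proof of the ear-extension lemma — especially verifying $|N_{\widetilde G}(C) \cap V(H')| \geq 2$ in the corner case $V(\widetilde G) = V(H') \cup C$, and treating length-one ears (chords) uniformly with longer ones. Once this lemma and the ear-preservation-of-$2$-connectedness fact are in place, the deduction that $G - e$ is $2$-connected is a mechanical induction on the number of ears, and the theorem follows from the contradiction with the minimality of $G$.
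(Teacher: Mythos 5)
Your argument is correct. Note first that the paper does not prove this statement at all: it is quoted from Bollob\'as's \emph{Extremal Graph Theory} as a known classical fact, so there is no in-paper proof to compare against. Your blind reconstruction is a legitimate self-contained proof: assuming some $e\in E(H)$ with $H-e$ still 2-connected, you extend $H$ to $G$ by an ear sequence $H=H_0\subsetneq H_1\subsetneq\cdots\subsetneq H_t=G$, observe that every edge of every ear $P_i$ lies outside $E(H_{i-1})\supseteq E(H)$ and hence avoids $e$, and conclude by induction (ear addition preserves 2-connectedness) that $G-e$ is 2-connected, contradicting minimality of $G$. The two points you flag as delicate are indeed the ones that need care, and you handle both correctly: the attachment set of a component $C$ of $\widetilde G - V(H')$ has at least two vertices because a unique attachment vertex $v$ would separate $C$ from the nonempty set $V(H')\setminus\{v\}$ (nonempty since $|V(H')|\ge 3$), and chords are legitimate length-one ears chosen from $E(\widetilde G)\setminus E(H')$, so they too avoid $e$. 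Incidentally, your ear-extension machinery is the same toolkit the paper deploys in its Lemma 2.1, so your proof fits naturally alongside the paper's methods even though the paper itself elects to cite the result rather than prove it.
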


\begin{thm}\cite{bondy2008graph}\label{thm4}
Every nontrivial tree has at least two leaves.
\end{thm}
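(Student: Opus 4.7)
The plan is to use the standard \emph{longest path} argument. Let $T$ be the given nontrivial tree, so $T$ has at least two vertices and (being connected) at least one edge. I would pick a path $P = v_0 v_1 \cdots v_k$ of maximum length in $T$; since there is at least one edge, $k \geq 1$. The goal is to show both endpoints $v_0$ and $v_k$ have degree $1$, which then produces two distinct leaves because $v_0 \neq v_k$ whenever $k \geq 1$.

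The key step is to argue that $v_0$ is a leaf. I would proceed by contradiction: suppose $v_0$ has a neighbor $u$ different from $v_1$. There are two cases. If $u$ lies on $P$, say $u = v_j$ with $j \geq 2$, then the edge $uv_0$ together with the subpath $v_0 v_1 \cdots v_j$ forms a cycle in $T$, contradicting the hypothesis that $T$ is a tree (acyclic). Otherwise $u \notin V(P)$, and then $u v_0 v_1 \cdots v_k$ is a path in $T$ strictly longer than $P$, contradicting the maximality of $P$. Either case is impossible, so $\deg_T(v_0) = 1$. The identical argument applied at the other end shows $\deg_T(v_k) = 1$.

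The main (indeed only) subtlety is the acyclicity step—it is precisely the tree property that prevents the extra neighbor $u$ from re-entering $P$, and that in turn forces the extension to produce a strictly longer path. I do not foresee a genuine obstacle; once the longest path is in hand, the rest is a direct application of the definitions.
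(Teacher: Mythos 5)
Your proof is correct: the longest-path argument is the standard one, and both cases (the extra neighbor $u$ lying on $P$ creating a cycle, or off $P$ extending the path) are handled properly. The paper itself gives no proof of this statement --- it is quoted as Theorem \ref{thm4} directly from Bondy and Murty --- so there is nothing to compare against beyond noting that your argument is the classical textbook one.
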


\section{Main results}

We first give a lemma, which will be used next.

\begin{lem}\label{lem1}
Let $G$ be a minimally 2-connected graph, and $G$ is not a cycle. Then $G$ has an ear
decomposition $G_0, G_1, \cdots, G_t(t\geq1)$ satisfying the following conditions:

\noindent (1) $G_i=G_{i-1}\bigcup P_{i-1}(1\leq i\leq t)$, where $P_{i-1}$ is an ear
of $G_{i-1}$ in $G$ and at least one vertex of $P_{i-1}$ has degree two in $G$;

\noindent (2) each of the two internally disjoint paths in $G_0$ between the end vertices
of $P_0$ has at least one vertex of degree two in $G$.
\end{lem}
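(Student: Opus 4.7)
The plan is to exploit the structure from Theorem~\ref{thm2} and build the ear decomposition around the degree-$2$ paths of $G$. Let $D$ denote the set of degree-$2$ vertices of $G$; by Theorem~\ref{thm2}, $F = G - D$ is a forest with at least two components, each component of $G[D]$ is a path, and such a path has endpoints attaching through single edges to vertices in two distinct trees of $F$. I call a component of $G[D]$ together with its two attaching edges a \emph{chunk}. A key observation is that every cycle $C$ of $G$ uses at least two chunks in full: any degree-$2$ interior vertex forces $C$ to use both of its incident edges, so a chunk touched by $C$ is traversed entirely, and a single chunk cannot close a cycle because its two $F$-endpoints lie in distinct trees of the acyclic $F$.

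To satisfy condition~(2), I would choose $G_0$ to be a cycle of $G$ that contains at least two chunks; writing $G_0$ cyclically as an alternation of chunks $Q_1, \ldots, Q_p$ and $F$-paths $F_1, \ldots, F_p$ (its \emph{$F$-segments}), I note that both endpoints of $P_0$ must lie in $V(F) \cap V(G_0)$, since an endpoint in $D$ would acquire degree at least $3$ in $G$ upon addition of $P_0$, contradicting its membership in $D$. Hence both endpoints of $P_0$ lie on $F$-segments. If they can be placed on two \emph{distinct} $F$-segments, then each arc of $G_0$ between them passes through at least one complete chunk and therefore contains a degree-$2$ vertex of $G$, giving~(2). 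To guarantee such placement, I would re-route $G_0$ if necessary by replacing one of its $F$-segments with a detour through $G \setminus G_0$ that traverses another chunk, producing a cycle whose $F$-segment distribution accommodates the desired $P_0$.

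For the extension step, given a proper $2$-connected subgraph $G_i \subsetneq G$, Theorem~\ref{thm3} tells me that $G_i$ is itself minimally $2$-connected. I would pick each subsequent ear $P_i$ so that its interior contains an internal vertex of a chunk of $G$ not already fully inside $G_i$; such a vertex has degree $2$ in $G$, so~(1) is satisfied. That such an ear always exists follows from the fact that as long as $G_i \neq G$ some chunk of $G$ is not wholly inside $G_i$, so an ear routing through part of that chunk is available; a pure-$F$ ear is never forced because any edge joining two distinct $F$-trees must lie on a chunk, and the remaining $F$-edges alone cannot be absorbed into an ear without simultaneously picking up some chunk interior along the way.

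The principal obstacle is the re-routing in the first step: when a naive choice of $G_0$ is such that every ear of $G_0$ has both endpoints on a single $F$-segment, one must modify $G_0$ carefully---using $2$-connectivity to swap that segment for a detour through a chunk-carrying ear---to obtain a new cycle whose chunk layout permits placing $P_0$'s endpoints on distinct segments. The bookkeeping around this modification, together with the chunk-availability argument in the extension step, will form the technical core of the proof.
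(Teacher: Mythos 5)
Your chunk-based setup is sound as far as it goes (every cycle of $G$ must contain a degree-two vertex, since $G-D$ is a forest, and a $2$-connected subgraph that meets a chunk must contain it entirely), but the proposal has genuine gaps at exactly the two points where the paper has to work hardest. First, your treatment of condition (2) is left unresolved: you acknowledge that when the chosen $G_0$ does not admit an ear with endpoints on distinct $F$-segments you must ``re-route'' $G_0$, but you give no construction and no argument that this re-routing succeeds or terminates. The paper avoids the issue entirely with an extremal choice: take $G_0$ to be a cycle containing the \emph{maximum} number of degree-two vertices. Then if one of the two arcs $P^\prime$ of $G_0$ between the endpoints of $P_0$ misses $D$, the cycle $P^{\prime\prime}\cup P_0$ contains strictly more vertices of $D$ than $G_0$ (since $P_0$ contains a vertex of $D$), a contradiction. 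This one-line exchange argument replaces your whole re-routing scheme.

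Second, and more seriously, your extension step rests on the unproved assertion that ``as long as $G_i\neq G$ some chunk of $G$ is not wholly inside $G_i$,'' i.e., that once all degree-two vertices have been absorbed the subgraph already equals $G$. This is not a triviality: a priori the process could stall with $D\subseteq V(G_i)$ but $V(G)\setminus V(G_i)\neq\emptyset$, at which point every remaining ear is a pure-$F$ ear and condition (1) fails. Your stated justification (``the remaining $F$-edges alone cannot be absorbed into an ear without picking up some chunk interior'') assumes what it is supposed to prove. The paper establishes this termination claim using minimality in an essential way: if $G_t\neq G$, then $G-V(G_t)$ is a nonempty forest because $D\subseteq V(G_t)$ and $G-D$ is a forest by Theorem \ref{thm2}; taking a component $T$ and two of its leaves (or its unique vertex if $|T|=1$), each of which has at least two neighbours in $G_t$ because it is not in $D$, one assembles a $2$-connected subgraph of $G$ containing a deletable edge, contradicting Theorem \ref{thm3}. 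Without this argument, or some substitute that genuinely invokes minimal $2$-connectedness, your induction does not close.
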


\begin{proof} We first construct a sequence of 2-connected subgraphs of $G$. Let $D\subset
V(G)$ be the set of vertices of degree two in $G$. Let $G_0$ be a
cycle of $G$ which contains as many vertices of $D$ as possible. If
$D\backslash V(G_0)\neq\emptyset$, then choose a vertex $v_0\in
D\backslash V(G_0)$. Since $G$ is 2-connected, from Menger's Theorem
there exist two internally disjoint paths $P^\prime,
P^{\prime\prime}$ from $v_0$ to two distinct vertices of $G_0$.
Hence $P_0=P^\prime\bigcup P^{\prime\prime}$ is an ear of $G_0$
which contains a vertex $v_0$ in $D$. Let $G_1=G_0\bigcup P_0$. If
$D\backslash V(G_1)\neq\emptyset$, then we continue the procedure.
After finite steps, we get a sequence $G_0, G_1,\cdots, G_t(t\geq1)$
of 2-connected subgraphs of $G$ such that $D\backslash
V(G_t)=\emptyset$ and $G_i=G_{i-1}\bigcup P_{i-1}(1\leq i\leq t)$,
where $P_{i-1}$ is an ear of $G_{i-1}$ containing at least one
vertex in $D$. If $G_t=G$, then from the procedures of construction,
the sequence $G_0, G_1,\cdots, G_t(t\geq1)$ is an ear decomposition
of $G$ satisfying condition (1).

We first show that $G_t=G$. Suppose on the contrary that $G_t\neq
G$, i.e., $G_t$ is a proper 2-connected subgraph of $G$. Since $G$
is minimally 2-connected, from Theorem \ref{thm3} we have
$V(G)\backslash V(G_t)\neq\emptyset$. From Theorem \ref{thm2}, $G-D$
is a forest. Since $D\subseteq V(G_t)$, $F=G-V(G_t)\subset G-D$ is
also a forest with $|F|\geq1$. Let $T$ be a component of $F$ with
$|T|\geq1$. Then $T$ is a tree. If $|T|=1$ and $V(T)=\{v\}$, then
there exist three distinct vertices $v_1, v_2, v_3$ in $G_t$ such
that $vv_j\in E(G)(1\leq j\leq 3)$. Let $G^\prime=(V^\prime,
E^\prime)$, where $V^\prime=V(G_t)\bigcup\{v\}$ and
$E^\prime=E(G_t)\bigcup\{vv_j:1\leq j\leq3\}$. So $G^\prime$ is a
2-connected subgraph of $G$. Since $G^\prime-vv_3$ is also
2-connected, $G^\prime$ is not minimally 2-connected which
contradicts to Theorem \ref{thm3}. Suppose $|T|\geq2$. From Theorem
\ref{thm4}, $T$ has at least two leaves (say $v^\prime,
v^{\prime\prime}$). Since $v^\prime, v^{\prime\prime}\notin D$ and
$d_T(v^\prime)=d_T(v^{\prime\prime})=1$, there exist four vertices
$v_i(1\leq i\leq4, v_1\neq v_2$ $v_3\neq v_4)$ in $G_t$ such that
$v^\prime v_1, v^\prime v_2, v^{\prime\prime}v_3,
v^{\prime\prime}v_4\in E(G)$. Let $P$ be the path from $v^\prime$ to
$v^{\prime\prime}$ in $T$. Then $G^\prime=(V^\prime, E^\prime)$,
where $V^\prime=V(G_t)\bigcup V(P)$ and $E^\prime=E(G_t)\bigcup
E(P)\bigcup\{v^\prime v_1, v^\prime v_2, v^{\prime\prime}v_3,
v^{\prime\prime}v_4\}$ is a 2-connected subgraph of $G$. Since
$G^\prime-vv_1$ is also 2-connected, $G^\prime$ is not minimally
2-connected which contradicts to Theorem \ref{thm3}. Therefore,
$G_t=G$.

Now we show that the ear decomposition $G_0, G_1, \cdots,
G_t(t\geq1)$ of $G$ satisfies condition (2). Denote by $P^\prime$
and $P^{\prime\prime}$ the two internally disjoint paths in $G_0$
between the two end vertices of $P_0$. Suppose on the contrary that
one of $P^\prime$ and $P^{\prime\prime}$(say $P^\prime$) has no
vertex of degree two in $G$, i.e., $V(P^\prime) \bigcap
D=\emptyset$. From the procedure of construction, $V(P_0)\bigcap
D\neq\emptyset$. Hence, $P^{\prime\prime}\bigcup P_0$ is a cycle of
$G$, which contains more vertices in $D$ than $G_0$, a
contradiction. Therefore, the ear decomposition $G_0, G_1, \cdots,
G_t(t\geq1)$ of $G$ satisfies condition (2).
\end{proof}

For convenience, we give some more notations. If $c$ is an
edge-coloring of a graph $G$, then $c(G)$ denotes the set of colors
appearing in $G$. Write $|G|$ for the order of a graph $G$. If $P$
is a path and $v_i, v_j\in V(P)$, then $v_iPv_j$ denotes the segment
of $P$ from $v_i$ to $v_j$.

\begin{lem}\label{lem2}
Let $G$ ba a minimally 2-connected graph of order $n\geq 3$. If $G$
is not a cycle, then $rc_2(G)\leq n-1$.
\end{lem}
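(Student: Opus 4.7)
The proof goes by induction on $t$, the number of ears in the decomposition $G_0, G_1, \dots, G_t$ of $G$ supplied by Lemma~\ref{lem1} (so $t \geq 1$). Writing $\ell_i$ for the length of $P_i$ and $n_0 = |V(G_0)|$, we have $n = n_0 + \sum_{i=0}^{t-1}(\ell_i - 1)$. A budget of $n-1$ colors decomposes naturally as $n_0$ colors on $G_0$ together with $\ell_i - 1$ new colors per ear: two colors will be reused on $P_0$ (handled in the base case), and exactly one color will be reused on each subsequent $P_i$.

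For the base case $t=1$, write $G_0 = P' \cup P''$ for the two arcs joining the endpoints $x,y$ of $P_0$, noting that $\ell(P'), \ell(P''), \ell_0 \geq 2$ by Lemma~\ref{lem1}. I plan to color $P'$ from $x$ to $y$ with distinct colors $1, \dots, \ell'$, color $P''$ from $x$ to $y$ with distinct colors $\ell'+1, \dots, \ell'+\ell''$ (so $G_0$ itself is rainbow), and on $P_0 = e_1 e_2 \cdots e_{\ell_0}$ set $c(e_1)$ equal to the color of the $P''$-edge at $y$, set $c(e_{\ell_0})$ equal to the color of the $P''$-edge at $x$, and give the interior edges $e_2, \dots, e_{\ell_0-1}$ a fresh palette of size $\ell_0 - 2$. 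The total color count is $n-1$. A case analysis by the location of the two vertices (both on $G_0$; one on $G_0$ and one internal to $P_0$; both internal to $P_0$; with the sub-cases when one vertex is $x$ or $y$) produces two internally disjoint rainbow paths in each case; crucially, since both colors reused on $P_0$ come from $P''$, any ``detour'' path routed through $P'$ never encounters them, which is what resolves the hardest case of two internal vertices of $P_0$.

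For the inductive step $t \geq 2$, the subgraph $G_{t-1}$ is minimally $2$-connected by Theorem~\ref{thm3} and is not a cycle (the endpoints of $P_0$ in $G_0$ have degree at least $3$ in $G_{t-1}$), so the inductive hypothesis provides a rainbow $2$-connected coloring of $G_{t-1}$ using at most $|V(G_{t-1})|-1$ colors. I extend it to $G_t$ by assigning $\ell_{t-1}-1$ fresh colors to all but one edge of $P_{t-1}$ and one color already used in $G_{t-1}$ to the remaining edge, so the edges of $P_{t-1}$ still carry pairwise distinct colors and the total stays at $|V(G_t)|-1$. Pairs of vertices inside $G_{t-1}$ keep their old rainbow paths. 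For a pair involving an internal vertex $w$ of $P_{t-1}$, I build the two required paths by concatenating the two segments of $P_{t-1}$ from $w$ to the endpoints $x', y'$ of $P_{t-1}$ (or, if both vertices lie on $P_{t-1}$, the direct sub-path together with a detour through $G_{t-1}$) with suitable rainbow paths in $G_{t-1}$ emanating from $x'$ and $y'$; rainbow $2$-connectivity of $G_{t-1}$ yields two internally disjoint rainbow paths from each of $x'$, $y'$ to any target, at most one of which uses the reused color, so a clash-free detour is always available.

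The main obstacle is the choice of reused colors: two on $P_0$ in the base case and one on each subsequent $P_i$, each choice being constrained so that \emph{for every} vertex pair the associated detour rainbow path remains available. This drives the symmetric placement of the two reused colors on $P_0$ (both drawn from $P''$ so that $P'$ supplies universally clean detours) and, in the inductive step, the use of a ``rainbow replacement'' argument: of the two internally disjoint rainbow paths out of each endpoint of $P_{t-1}$ in $G_{t-1}$, at least one avoids any prescribed single color, which is exactly the freedom needed to neutralise the single reused color on $P_{t-1}$.
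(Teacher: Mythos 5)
Your base case is essentially the paper's: both reused colors on $P_0$ are drawn from one arc $P''$ of $G_0$ and placed crosswise at the two ends of the ear, so that detours through $P'$ stay clean; Lemma~\ref{lem1}(2) indeed guarantees $\ell(P'),\ell(P''),\ell_0\geq 2$ as you claim. The genuine gap is in the inductive step. You induct on the number of ears and take as inductive hypothesis only that $G_{t-1}$ admits a rainbow $2$-connected coloring with $|V(G_{t-1})|-1$ colors. That hypothesis is too weak to extend over the new ear. Take $v'\in V(G_{t-1})\setminus\{x',y'\}$ and an internal vertex $w$ of $P_{t-1}$: any two internally disjoint rainbow $v'$--$w$ paths in $G_t$ must enter the ear through $x'$ and $y'$ separately, so you need a rainbow path from $v'$ to $x'$ and a rainbow path from $v'$ to $y'$ meeting only at $v'$ (a rainbow fan). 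Rainbow $2$-connectivity supplies two internally disjoint rainbow $v'$--$x'$ paths and two internally disjoint rainbow $v'$--$y'$ paths, but nothing forces any path of the first pair to be internally disjoint from any path of the second; your sentence that rainbow $2$-connectivity ``yields two internally disjoint rainbow paths from each of $x'$, $y'$ to any target'' does not produce the mutually disjoint pair you then concatenate. This is precisely why the paper does not apply the lemma to $G_{t-1}$ as a black box, but instead carries strengthened invariants along the ear decomposition: condition (A2) (internally disjoint rainbow paths from one vertex to two prescribed targets) and condition (A3) (two disjoint rainbow paths joining two prescribed pairs), where (A3) for $G_{i-1}$ is what proves (A2) for $G_i$, and (A2) is what proves plain rainbow $2$-connectivity of $G_i$.

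A second, smaller omission: you must specify which already-used color goes on the ear and where. Your clash-avoidance argument requires the reused color to appear on exactly one edge of $G_{t-1}$, and in the mixed cases it must moreover lie on an edge incident with one endpoint of the ear, so that every path in $G_{t-1}$ avoiding that endpoint automatically avoids that color. The paper encodes this via the injection $f_{i-1}$ and condition (A5), reusing exactly the color $f_{i-1}(v_1)$ on the ear-edge at the opposite endpoint $v_q$; maintaining a ``private'' color at every vertex of degree at least three is itself part of the induction, and without such an invariant your step cannot be iterated.
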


\begin{proof}
Let $G$ be a minimally 2-connected graph of order $n$ and $G$ is not
a cycle. We will prove the result by giving an edge-coloring of $G$
with $n-1$ colors which makes $G$ rainbow 2-connected. From Lemma
\ref{lem1}, $G$ has an ear decomposition $G_0, G_1, \cdots,
G_t(t\geq1)$ satisfying the two conditions in Lemma \ref{lem1}. Let
$D\subset V(G)$ be the set of vertices of degree two in $G$ and
$\overline{D}=V(G)\backslash D$. In the following, for every graph
$G_i(1\leq i\leq t)$ we will define an edge-coloring $c_i$ of $G_i$
with $|G_i|-1$ colors and a map $f_i$ from $\overline{D}\bigcap
V(G_i)$ to $c_i(G_i)$ satisfying the following conditions:

\noindent(A1): $G_i$ is rainbow 2-connected;

\noindent(A2): for any three distinct vertices $v^\prime,
v_1^{\prime\prime}, v_2^{\prime\prime}\in V(G_i)$, $G_i$ has two
internally disjoint rainbow paths $P^\prime$ and $P^{\prime\prime}$
from $v^\prime$ to $v_1^{\prime\prime}$ and $v_2^{\prime\prime}$,
respectively;

\noindent(A3): for any four distinct vertices $v_1^\prime,
v_2^\prime, v_1^{\prime\prime}, v_2^{\prime\prime}\in V(G_i)$, $G_i$
has two disjoint rainbow paths $P^\prime$ from $v_1^\prime$ to one
of $v_1^{\prime\prime}, v_2^{\prime\prime}$(say
$v_1^{\prime\prime}$) and $P^{\prime\prime}$ from $v_2^\prime$ to
the other vertex $v_2^{\prime\prime}$;

\noindent(A4): $f_i$ is injective, i.e., for any two distinct
vertices $v^\prime, v^{\prime\prime}\in\overline{D}\bigcap V(G_i)$,
$f_i(v^\prime)\neq f_i(v^{\prime\prime})$;

\noindent(A5): for any vertex $v\in\overline{D}\bigcap V(G_i)$, the
color $f_i(v)$ appears exactly once in $c_i$ and the edge colored by
$f_i(v)$ in $G_i$ is incident with $v$.

We define $c_i$ and $f_i$ of $G_i(1\leq i\leq t)$ by induction.
First, consider the graph $G_1=G_0\bigcup P_0$. Without loss of
generality, suppose that $G_0=v_1, v_2, \cdots, v_s$ and $P_0=v_1,
v_{s+1},$ $v_{s+2}, \cdots, v_\ell, v_p(\ell>s)$, where $G_0$ is a
cycle, $P_0$ is a path and $V(G_0)\bigcap V(P_0)=\{v_1, v_p\}(3\leq
p\leq s-1)$. Since the ear decomposition $G_0, G_1, \cdots,
G_t(t\geq 1)$ of $G$ satisfies the two conditions in Lemma
\ref{lem1}, there exist three vertices $v_{p_1}, v_{p_2}, v_{p_3}\in
D(1<p_1<p<p_2\leq s<p_3\leq\ell)$ in $G_1$. Define an edge-coloring
$c_1$ of $G_1$ by $c_1(v_jv_{j+1}) =x_j$ if $1\leq j\leq s-1$ or
$s+1\leq j\leq\ell-1$; $c_1(v_sv_1)=c_1(v_\ell v_p)=x_s$ and
$c_1(v_1v_{s+1})=x_p$, where $x_1, x_2,\cdots, x_{\ell-1}$ are
distinct colors. It is obvious that $c_1$ uses $|G_1|-1$ colors.
Define a map $f_1: \overline{D}\bigcap V(G_1) \rightarrow c_1(G_1)$
by $f_1(v_j)=x_j$ if $v_j\in \overline{D}\bigcap V(G_1)$ and $1\leq
j<p_1, p+1\leq j<p_2$ or $s+1\leq j<p_3$ and $f_1(v_j)=x_{j-1}$ if
$v_j\in \overline{D}\bigcap V(G_1)$ and $p_1< j\leq p, p_2<j\leq s$
or $p_3<j\leq\ell$. It can be checked that $c_1$ and $f_1$ satisfy
the above conditions (A1)-(A5).

If $t=1$, then $c_1$ is the rainbow 2-connected edge-coloring of $G$
with $n-1$ colors. Consider the case $t\geq 2$. Assume that we have
defined $c_{i-1}$ and $f_{i-1}$ of $G_{i-1}(2\leq i\leq t)$
satisfying conditions (A1)-(A5) and the edge-coloring $c_{i-1}$ of
$G_{i-1}$ uses $|G_{i-1}|-1$ colors. Now consider the graph
$G_i=G_{i-1}\bigcup P_{i-1}$. Suppose that $P_{i-1}=v_1, v_2,\cdots,
v_q(q\geq3)$, where $V(G_{i-1})\bigcup V(P_{i-1})= \{v_1, v_q\}$. It
is obvious that $v_1, v_q\in\overline{D}\bigcap V(G_{i-1})$. Define
an edge-coloring $c_i$ of $G_i$ by $c_i(e)=c_{i-1}(e)$ for $e\in
E(G_{i-1})$, $c_i(v_{q-1}v_q)=f_{i-1}(v_1)$ and
$c_i(v_jv_{j+1})=y_j(1\leq j\leq q-2)$, where $y_1, y_2, \cdots,
y_{q-2}$ are distinct new colors. It is clear that $c_i$ uses
$|G_i|-1$ colors. From condition (1) of the Lemma \ref{lem1}, there
exists a vertex $v_{q_0}\in D(2\leq q_0\leq q-1)$ in $P_{i-1}$.
Define a map $f_i: \overline{D}\bigcap V(G_i)\rightarrow c_i(G_i)$
as follows: $f_i(v)=f_{i-1}(v)$ for $v\in[\overline{D}\bigcap
V(G_{i-1})] \backslash\{v_1\}$, $f_i(v_j)=y_j$ for
$v_j\in\overline{D}\bigcap V(v_1P_{i-1}v_{q_0-1})$ and
$f_i(v_j)=y_{j-1}$ for $v_j\in\overline{D}\bigcap
V(v_{q_0+1}P_{i-1}v_{q-1})$. The edge-coloring $c_i$ of $G_i$ has
the following two properties.

\noindent(B1): There exists a rainbow path $P_{i-1}^\prime$ from
$v_1$ to $v_q$ in $G_{i-1}$ such that the color $f_{i-1}(v_1)$ does
not appear on it. In fact, since $G_{i-1}$ is rainbow 2-connected,
there are two internally disjoint rainbow paths in $G_{i-1}$
connecting $v_1, v_q$. Since the map $f_{i-1}$ satisfies condition
(A5), the color $f_{i-1}(v_1)$ appears exactly once in $G_{i-1}$. So
$f_{i-1}(v_1)$ does not appear on one of the two rainbow paths,
denoted by $P_{i-1}^\prime$, from $v_1$ to $v_q$.

\noindent(B2): Since $c_{i-1}$ and $f_{i-1}$ satisfy condition (A5),
the color $f_{i-1}(v_1)$ does not appear on any path in $G_{i-1}$
which does not contain $v_1$.

We will show that $c_i$ and $f_i$ satisfy conditions (A1)-(A5).

(I). Consider any two distinct vertices $v^\prime,
v^{\prime\prime}\in V(G_i)$. If $v^\prime, v^{\prime\prime}\in
V(G_{i-1})$, there exist two internally disjoint rainbow paths
connecting them in $G_{i-1}$, which are also rainbow paths in $G_i$
according to the definition of $c_i$. Assume $v^\prime,
v^{\prime\prime}\in V(P_{i-1})$. From property (B1),
$P_{i-1}^\prime\bigcup P_{i-1}$ is a cycle whose colors are
distinct. Hence there are two internally disjoint rainbow paths from
$v^\prime$ to $v^{\prime\prime}$ on the cycle $P_{i-1}^\prime\bigcup
P_{i-1}$. Assume $v^\prime\in V(G_{i-1})\backslash \{v_1, v_q\}$ and
$v^{\prime\prime}\in V(P_{i-1})\backslash \{v_1, v_q\}$. Since
$c_{i-1}$ satisfies condition (A2), there exist two internally
disjoint rainbow paths $P^\prime$ and $P^{\prime\prime}$ in
$G_{i-1}$ from $v^\prime$ to $v_1$ and $v_q$, respectively. From
property (B2) and $v_1\notin V(P^{\prime\prime})$, we have
$f_{i-1}(v_1)\notin c_i(P^{\prime\prime})$. So $v^\prime P^\prime
v_1P_{i-1}v^{\prime\prime}$ and $v^\prime P^{\prime\prime}
v_qP_{i-1}v^{\prime\prime}$ are two internally disjoint rainbow
paths from $v^\prime$ to $v^{\prime\prime}$ in $G_i$.  Therefore,
$G_i$ is rainbow 2-connected.

(II). Consider any three distinct vertices $v^\prime,
v_1^{\prime\prime}, v_2^{\prime\prime} \in V(G_i)$. If $v^\prime,
v_1^{\prime\prime}, v_2^{\prime\prime}\in V(G_{i-1})$, then from
condition (A2) of $c_{i-1}$ and the definition of $c_i$, there exist
two internally disjoint rainbow paths $P^\prime$ and
$P^{\prime\prime}$ in $G_{i-1}$ from $v^\prime$ to
$v_1^{\prime\prime}$ and $v_2^{\prime\prime}$, respectively. If
$v^\prime, v_1^{\prime\prime}, v_2^{\prime\prime}\in V(P_{i-1})$,
from property (B1) there exist two internally disjoint rainbow paths
on the cycle $P_{i-1}^\prime\bigcup P_{i-1}$ from $v^\prime$ to
$v_1^{\prime\prime}$ and $v_2^{\prime\prime}$, respectively.
Consider the case that $v^\prime, v_1^{\prime\prime}\in
V(G_{i-1})\backslash\{v_1\}$ and $v_2^{\prime\prime}\in
V(P_{i-1})\backslash\{v_q\}$. From condition (A2) of $c_{i-1}$,
there exist two internally disjoint rainbow paths $P^\prime$ and
$P^{\prime\prime}$ in $G_{i-1}$ from $v^\prime$ to
$v_1^{\prime\prime}$ and $v_1$, respectively. So $P^\prime$ and
$v^\prime P^{\prime\prime}v_1P_{i-1}v_2^{\prime\prime}$ are two
internally disjoint rainbow paths in $G_i$ from $v^\prime$ to
$v_1^{\prime\prime}$ and $v_2^{\prime\prime}$, respectively.
Consider the case that $v^\prime\in V(G_{i-1})\backslash\{v_1,
v_q\}$ and $v_1^{\prime\prime}, v_2^{\prime\prime}\in V(P_{i-1})$.
Without loss of generality, $v_1^{\prime\prime}$,
$v_2^{\prime\prime}$ appear on $P_{i-1}$ in this order. From
condition (A2) of $c_{i-1}$, there exist two internally disjoint
rainbow paths $P^\prime$ and $P^{\prime\prime}$ in $G_{i-1}$ from
$v^\prime$ to $v_1$ and $v_q$, respectively. From property (B2) and
$v_1\notin V(P^{\prime\prime})$, we have $f_{i-1}(v_1)\notin
c_i(P^{\prime\prime})$. So $v^\prime P^\prime
v_1P_{i-1}v_1^{\prime\prime}$ and $v^\prime P^{\prime\prime}
v_qP_{i-1}v_2^{\prime\prime}$ are two internally disjoint rainbow
paths in $G_i$ from $v^\prime$ to $v_1^{\prime\prime}$ and
$v_2^{\prime\prime}$, respectively. Consider the case that
$v_1^{\prime\prime}, v_2^{\prime\prime}\in V(G_{i-1})
\backslash\{v_1\}$ and $v^\prime\in V(P_{i-1})\backslash\{v_q\}$.
From condition (A3) of $c_{i-1}$, there exist two disjoint rainbow
paths $P^\prime$ from $v_1^{\prime\prime}$ to one of $v_1, v_q$ (say
$v_1$) and $P^{\prime\prime}$ from $v_2^{\prime\prime}$ to the other
vertex $v_q$. If $v_2^{\prime\prime}=v_q$, then $P^{\prime\prime}=
v_2^{\prime\prime}$. From property (B2), $v^\prime
P_{i-1}v_1P^\prime v_1^{\prime\prime}$ and $v^\prime
P_{i-1}v_qP^{\prime\prime}v_2^{\prime\prime}$ are two internally
disjoint rainbow paths from $v^\prime$ to $v_1^{\prime\prime}$ and
$v_2^{\prime\prime}$, respectively. Consider the case that
$v_2^{\prime\prime}\in V(G_{i-1}) \backslash\{v_1\}$ and
$v_1^{\prime\prime}, v^\prime\in V(P_{i-1})$. Without loss of
generality, $v_1^{\prime\prime}$, $v^\prime$ appear on $P_{i-1}$ in
this order. Since the color $f_{i-1}(v_1)$ appears exactly once in
$G_{i-1}$, one of the two internally disjoint rainbow paths in
$G_{i-1}$ from $v_q$ to $v_2^{\prime\prime}$, denoted by $P^\prime$,
does not contain the edge colored by $f_{i-1}(v_1)$, i.e.,
$f_{i-1}(v_1)\notin c_i(P^\prime)$. So $v^\prime
P_{i-1}v_1^{\prime\prime}$ and $v^\prime P_{i-1}v_qP^\prime
v_2^{\prime\prime}$ are two internally disjoint rainbow paths in
$G_i$ from $v^\prime$ to $v_1^{\prime\prime}$ and
$v_2^{\prime\prime}$, respectively. Therefore, $c_i$ satisfies
condition (A2).

(III). Consider any four distinct vertices $v_1^\prime, v_2^\prime,
v_1^{\prime\prime}, v_2^{\prime\prime}\in V(G_i)$. If $v_1^\prime,
v_2^\prime, v_1^{\prime\prime}, v_2^{\prime\prime}\in V(G_{i-1})$,
then there exist two required disjoint rainbow paths in $G_{i-1}$
from condition (A3) of $c_{i-1}$ and the definition of $c_i$. If
$v_1^\prime, v_2^\prime, v_1^{\prime\prime}, v_2^{\prime\prime}\in
V(P_{i-1})$, then there exist two required disjoint rainbow paths on
the cycle $P_{i-1}^\prime\bigcup P_{i-1}$ from property (B1).
Consider the case that $v_1^\prime, v_2^\prime,
v_1^{\prime\prime}\in V(G_{i-1})\backslash\{v_1\}$ and
$v_2^{\prime\prime}\in V(P_{i-1})\backslash\{v_q\}$. From condition
(A3) of $c_{i-1}$, there exist two disjoint rainbow paths $P^\prime$
from $v_1^\prime$ to one of $v_1, v_1^{\prime\prime}$ (say
$v_1^{\prime\prime}$) and $P^{\prime\prime}$ from $v_2^\prime$ to
the other vertex $v_1$ in $G_{i-1}$. Then $P^\prime$ and $v_2^\prime
P^{\prime\prime}v_1P_{i-1}v_2^{\prime\prime}$ are two required
disjoint rainbow paths in $G_i$. Consider the case that $v_1^\prime,
v_2^\prime\in V(G_{i-1})\backslash\{v_1\}$ and $v_1^{\prime\prime},
v_2^{\prime\prime}\in V(P_{i-1})\backslash\{v_q\}$. Without loss of
generality, $v_1^{\prime\prime}, v_2^{\prime\prime}$ appear on
$P_{i-1}$ in this order. From condition (A3) of $c_{i-1}$, there
exist two disjoint rainbow paths $P^\prime$ from $v_1^\prime$ to
$v_1, v_q$ (say $v_1$) and $P^{\prime\prime}$ from $v_2^\prime$ to
the other vertex $v_q$ in $G_{i-1}$. If $v_2^\prime=v_q$, then
$P^{\prime\prime}= v_2^\prime$. Hence $v_1^\prime P^\prime
v_1P_{i-1}v_1^{\prime\prime}$ and $v_2^\prime
P^{\prime\prime}v_qP_{i-1} v_2^{\prime\prime}$ are two required
disjoint rainbow paths in $G_i$. Consider the case that $v_1^\prime,
v_1^{\prime\prime}\in V(G_{i-1})\backslash \{v_1\}$ and $v_2^\prime,
v_2^{\prime\prime}\in V(P_{i-1})\backslash\{v_q\}$. From condition
(A1) of $c_{i-1}$, let $P^\prime$ be a rainbow path from
$v_1^\prime$ to $v_1^{\prime\prime}$ in $G_{i-1}$. Then $P^\prime$
and $v_2^\prime P_{i-1}v_2^{\prime\prime}$ are two required disjoint
rainbow paths in $G_i$. Consider the case that $v_1^\prime\in
V(G_{i-1})\backslash \{v_1\}$ and $v_2^\prime, v_1^{\prime\prime},
v_2^{\prime\prime}\in V(P_{i-1}) \backslash\{v_q\}$. Without loss of
generality, $v_2^\prime, v_2^{\prime\prime}, v_1^{\prime\prime}$
appear on $P_{i-1}$ in this order. From conditions (A1) and (A5) of
$c_{i-1}$ and $f_{i-1}$, there exists one rainbow path $P^\prime$ in
$G_{i-1}$ from $v_1^\prime$ to $v_q$ such that $f_{i-1}(v_1)\notin
c_i(P^\prime)$. Then $v_1^\prime P^\prime
v_qP_{i-1}v_1^{\prime\prime}$ and $v_2^\prime
P_{i-1}v_2^{\prime\prime}$ are two required disjoint rainbow paths
in $G_i$. Therefore, $c_i$ satisfies condition (A3).

(VI). From condition (A4) of $f_{i-1}$ and the definition of $f_i$,
$f_i$ is injective.

(V). From condition (A4) of $f_{i-1}$ and the definition of $f_i$,
$f_i$ satisfies condition (A5).

Therefore, we can get an edge-coloring $c_t$ of $G(=G_t)$ with
$n-1(=|G_t|-1)$ colors which makes $G$ rainbow 2-connected. So
$rc_2(G)\leq n-1$.
\end{proof}

An easy observation is that if $G^\prime$ is a spanning subgraph of
a graph $G$ and $rc_k(G)$ and $rc_k(G^\prime)$ are indeed exist,
then we have $rc_k(G)\leq rc_k(G^\prime)(k\geq 1)$.

Now we are ready to prove our main result Theorem \ref{thm0}.

\noindent {\bf Proof of Theorem \ref{thm0}:} If $G$ is an $n$-vertex
cycle, then we have $rc_2(G)=n$. Hence, to prove the result we only
need to show that for any 2-connected graph $G$ of order $n$ which
is not a cycle, $rc_2(G)\leq n-1$. Let $G$ be such a graph. Consider
the following two cases.

{\bf Case 1}. $G$ is Hamiltonian.

Let $C=v_1, v_2, \cdots, v_n$ be a Hamiltonian cycle of $G$. Since
$G$ is not a cycle, there must be a chordal of $C$ (say $v_1v_j\in
E(G)(3\leq j\leq n-1)$, without loss of generality) in $G$. Then
$G^\prime=(V(G), E(C)\bigcup\{v_1v_j\})$ is a spanning 2-connected
subgraph of $G$. Let $x_1, x_2, \cdots, x_{n-1}$ be $n-1$ distinct
colors. Define an edge-coloring $c$ of $G^\prime$ with $n-1$ colors
as follows: $c(v_1v_2)=c(v_jv_{j+1})=x_1$, $c(v_1v_n)=c(v_{j-1}v_j)
=x_2$ and the other $n-3$ edges of $G^\prime$ are colored by colors
$x_3, \cdots, x_{n-1}$. It can be checked that $G^\prime$ is rainbow
2-connected. From the above observation, $rc_2(G)\leq
rc_2(G^\prime)\leq n-1$.

{\bf Case 2}. $G$ is not Hamiltonian.

Let $G^\prime$ be a spanning minimally 2-connected subgraph of $G$.
Since $G$ is not Hamiltonian, $G^\prime$ is not a cycle. From Lemma
\ref{lem2} and the above observation, we have $rc_2(G)\leq
rc_2(G^\prime)\leq n-1$.

The proof is now complete. $\Box$

\end{document}